\newtheorem{thm}{Theorem}
\newtheorem{con}{Conjecture}
\newtheorem{quest}{Question}
\newtheorem{cor}[thm]{Corollary}
\newtheorem{lem}[thm]{Lemma}
\newtheorem{prop}[thm]{Proposition}
\numberwithin{thm}{section}
\numberwithin{equation}{section}
\newcommand{\norm}[1]{\left\Vert#1\right\Vert}
\newcommand{\abs}[1]{\left\vert#1\right\vert}
\newcommand{\la}{\langle}
\newcommand{\ra}{\rangle}
\newcommand{\A}{\mathcal{A}}
\newcommand{\Comp}{\mathbb{C}}
\newcommand{\Gb}{\mathbb{G}}
\newcommand{\I}{\mathcal{I}}
\newcommand{\prt}{\widehat{\otimes}}
\begin{document}
\title{Operator biflatness of the $L^1$-algebras of compact quantum groups}
\author{Martijn Caspers}
\author{Hun Hee Lee}
\author{\'Eric Ricard}

\address{M. Caspers, Universit\'e de Franche-Comt\'e, 16 Route de Gray, 25030 Besan\c con, France}
\email{martijn.caspers@univ-fcomte.fr}

\address{Hun Hee Lee,
Department of Mathematical Sciences and Research Institute of Mathematics,
Seoul National University, San56-1 Shinrim-dong Kwanak-gu, Seoul 151-747, Republic of Korea}
\email{lee.hunhee@gmail.com}
\address{\'Eric Ricard, Laboratoire de
Math\'ematiques Nicolas Oresme, Universit\'e de Caen
Basse-Normandie,14032 Caen Cedex, France}
\email{eric.ricard@unicaen.fr}

\keywords{operator biflatness, operator biprojectivity, compact quantum group, $SU_q(n)$, $q$-deformation}
\thanks{2000 \it{Mathematics Subject Classification}.
\rm{Primary 43A20, Secondary 20G42}. \\
The first named author is supported by the ANR project: ANR-2011-BS01-008-01.}

\begin{abstract}
We prove that the $L^1$-algebra of any non-Kac type compact quantum group is not operator biflat. Since operator amenability implies operator biflatness, this result shows that any co-amenable, non-Kac type compact quantum group gives a counter example to the conjecture that $L^1(\Gb)$ is operator amenable if and only if $\Gb$ is amenable and co-amenable for any locally compact quantum group $\Gb$. The result also implies that the $L^1$-algebra of a locally compact quantum group is operator biprojective if and only if $\Gb$ is compact and of Kac type.
\end{abstract}

\maketitle

\section{Introduction}

The convolution algebra $L^1(G)$ with respect to a locally compact group $G$  is a central object in abstract harmonic analysis. Homological/cohomological properties of $L^1(G)$ reflect important features of the underlying group $G$. For example, $L^1(G)$ is {\it amenable} as a Banach algebra if and only if $G$ is an {\it amenable} group \cite{J2}. Recall that a Banach algebra $\A$ is called amenable if every bounded derivation $D : \A \to X^*$ is inner for any $\A$-bimodule $X$ and a locally compact group $G$ is called amenable if there is a left invariant mean on $L^\infty(G)$. Since the work of B.E. Johnson, it has been conjectured that the dual object of $L^1(G)$, namely the Fourier algebra $A(G)$, would reflect the amenability of $G$ in the same way. But Z.-J. Ruan showed that we actually need to move to the category of operator spaces. More precisely, he showed in \cite{Ruan95} that $A(G)$ is operator amenable (i.e. amenable in the category of completely contractive Banach algebras) if and only if $G$ is amenable if and only if $L^1(G)$ is operator amenable with the natural operator space structure. The work of Ruan has demonstrated that the category of operator spaces would be a natural place to work in when we deal with the $L^1$-algebra of a locally compact quantum group $\Gb$.

It is a natural question whether the results of Johnson and Ruan could be extended to the case of locally compact quantum groups. The case of Kac algebras has some satisfactory partial results. Let $\Gb$ be a compact Kac algebra. Then the $L^1$-algebra $L^1(\Gb)$ is operator amenable if and only if $\widehat{\Gb}$ is amenable \cite[Theorem 4.5]{Ruan96}. Note that most of the results in \cite[Theorem 4.5]{Ruan96} have been extended to the case of compact quantum groups by R. Tomatsu \cite{Tom06} except the operator amenability of the $L^1$-algebra. Actually, nothing is known beyond the above results except the following easy implications (\cite[Proposition 2.3]{Ruan95} and \cite[Theorem 3.2]{BT03}), namely,
	$$\text{\it if $L^1(\Gb)$ is operator amenable, then $\Gb$ is amenable and co-amenable.}$$
Here, the amenability of $\Gb$ implies the existence of a left invariant mean on $L^\infty(\Gb)$ and co-amenability of $\Gb$ implies the existence of a bounded approximate identity in $L^1(\Gb)$. Since the partial success of Ruan it has been conjectured by V. Runde that the above implication is actually an equivalence, namely,  
	\begin{con}\label{conjecture}
	$L^1(\Gb)$ is operator amenable if and only if $\Gb$ is amenable and co-amenable.
	\end{con}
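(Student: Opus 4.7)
The plan is to refute the conjecture, rather than prove it, via a homological obstruction. Only the direction ``$\Gb$ amenable and co-amenable $\Rightarrow L^1(\Gb)$ operator amenable'' is in question, and the key observation is that operator amenability implies operator biflatness (recall: operator amenability equals operator biflatness together with the existence of a bounded approximate identity). It therefore suffices to produce a compact quantum group $\Gb$ that is both amenable (automatic since the Haar state furnishes a left invariant mean on $L^\infty(\Gb)$) and co-amenable, yet whose $L^1$-algebra is not operator biflat.

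The natural test cases are the $q$-deformations such as $\Gb = SU_q(n)$ with $0 < q < 1$: these are co-amenable and manifestly non-Kac, since the Haar state is not tracial. So the problem reduces to the universal claim that for every non-Kac type compact quantum group $\Gb$, the algebra $L^1(\Gb)$ fails to be operator biflat. If established, this disproves the conjecture at every co-amenable non-Kac example, as promised by the abstract.

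To prove non-biflatness I would argue by contradiction. Suppose a completely bounded $L^1(\Gb)$-bimodule map $\sigma: L^1(\Gb) \to (L^1(\Gb) \widehat{\otimes} L^1(\Gb))^{**}$ splitting the multiplication existed. Via the Peter--Weyl decomposition $L^1(\Gb) \cong \ell^1\text{-}\bigoplus_\alpha M_{n_\alpha}^*$, I would track $\sigma$ on matrix units inside a single irreducible block. The left and right module actions on $L^1(\Gb) \widehat{\otimes} L^1(\Gb)$ differ by a twist involving the modular matrices $F_\alpha$ that implement the KMS structure of the Haar state, and non-Kacness is equivalent to the spectrum of these $F_\alpha$ being unbounded across irreducibles. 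Choosing test elements built from matrix units sitting in blocks whose $F_\alpha$-eigenvalues become increasingly skewed, the bimodule compatibility forced on $\sigma$ should produce a lower bound for $\|\sigma\|$ that diverges with the irreducible index, contradicting the assumed boundedness.

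The main obstacle I anticipate is converting this qualitative unboundedness of the antipode into a concrete divergent norm estimate inside the bidual of the operator projective tensor product. Controlling the operator space norm of the chosen test elements together with their images under $\sigma$, and ensuring that the bimodule identities satisfied by $\sigma$ actually transport the modular blow-up into a genuine lower bound on $\|\sigma\|$ rather than being absorbed by biduality, is the delicate technical heart of the argument; once this quantitative core is in place the reduction to the failure of the conjecture is formal.
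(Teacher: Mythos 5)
Your overall reduction is exactly the one the paper uses: the conjecture is refuted by showing that $L^1(\Gb)$ of any non-Kac compact quantum group fails to be operator biflat (hence operator amenable), and then citing co-amenable non-Kac examples such as $SU_q(n)$. However, the proposal stops at the point where the actual proof begins, and the part you defer as ``the delicate technical heart'' is not a routine verification but the entire content of the theorem; as written there is a genuine gap. Concretely, three things are missing. First, you need to pin down the splitting map on matrix coefficients: the bimodule property forces $\theta(u^\alpha_{ij}\otimes u^\beta_{kl})=\delta_{\alpha\beta}X^\alpha_{jk}u^\alpha_{il}$ for some matrices $X^\alpha$ with the normalization $\sum_r X^\alpha_{rr}=1$ coming from $\theta\circ\Delta=id$; without this normalization there is nothing preventing the modular skewing from being ``absorbed,'' which is precisely the worry you raise but do not resolve. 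Second, the divergent lower bound does not come from a soft unboundedness argument but from a specific operator-space computation: one tests $\theta$ against the contraction $(Q^\alpha)^{-1/2}(u^\alpha)^t(Q^\alpha)^{1/2}\otimes u^\alpha$, factors the image through $C_{n_\alpha^2}\otimes_{\min}R_{n_\alpha^2}$, computes the resulting norm exactly as $\bigl(\sum_{i,k}|X^\alpha_{ik}|^2/\lambda^\alpha_i\bigr)^{1/2}\sqrt{m_\alpha}$, combines with the symmetric estimate, and applies Cauchy--Schwarz against $\sum_i X^\alpha_{ii}=1$ to get $\|\theta\|_{cb}^2\ge m_\alpha/n_\alpha$.

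Third, and most importantly, your appeal to ``blocks whose $F_\alpha$-eigenvalues become increasingly skewed'' presupposes what must be proved. Non-Kacness only gives you a single $\alpha$ with $Q^\alpha\ne I$, and for that fixed $\alpha$ the inequality $m_\alpha\le\|\theta\|_{cb}^2 n_\alpha$ is no contradiction at all. The paper closes this gap with a tensor-power/fusion argument: since $Q^{u\otimes v}=Q^u\otimes Q^v$ and both $\mathrm{Tr}\,Q$ and the dimension are additive over the irreducible decomposition of $(u^\alpha)^{\otimes d}$, one gets $(m_\alpha)^d\le\|\theta\|_{cb}^2(n_\alpha)^d$ for all $d$, hence $m_\alpha\le n_\alpha$ after taking $d$-th roots, which forces $Q^\alpha=I$. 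Your asserted equivalence between non-Kacness and unboundedness of the modular spectrum across irreducibles is essentially this fusion argument in disguise, so you cannot take it as a given; and even granting it, you would still need the quantitative steps above to convert it into a lower bound on $\|\sigma\|$. Working with $\theta:(L^1\widehat{\otimes}L^1)^*\cong L^\infty\bar{\otimes}L^\infty\to L^\infty$ rather than a map into the bidual, as the paper does, also removes the biduality complication you flag.
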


Operator amenability is closely related to homological properties, namely, operator biprojectivity and operator biflatness. Recall that a completely contractive Banach algebra $\A$ is called {\it operator biprojective} if there is a completely bounded $\A$-bimodule map $\rho : \A \to \A \prt \A$ such that $m\circ \rho = id_\A$, where $\prt$ is the projective tensor product of operator spaces and $m: \A \prt \A \to \A$ is the algebra multiplication. $\A$ is called {\it operator biflat} if there is a completely bounded $\A$-bimodule map $\theta : (\A \prt \A)^* \to \A^*$ such that $\theta \circ m^* = id_{\A^*}$. Clearly, operator biprojectivity implies operator biflatness. In the case of $L^1(G)$ and $A(G)$ for a locally compact group $G$ we have a good understanding of operator biprojectivity. Indeed, $L^1(G)$ is operator biprojective if and only if $G$ is compact and $A(G)$ is operator biprojective if and only if $G$ is discrete, which makes a complete dual picture. Based on the group situation O. Y. Aristov examined the quantum case. He proved that if $L^1(\Gb)$ for a locally compact quantum group $\mathbb{G}$ is operator biprojective, then $\Gb$ must be compact  \cite[Theorem 4.7]{Aristov}  and if $\Gb$ is a compact Kac algebra, then $L^1(\Gb)$ is operator biprojective  \cite[Theorem 4.12]{Aristov}, which led to the following question of his.
	\begin{quest}\label{question}
	Is $L^1(\Gb)$ operator biprojective for any compact quantum group $\Gb$?
	\end{quest}
Note that the positive solution of the above question for the Kac algebra case has already been proved by Z.-J. Ruan/G. Xu \cite{RX} and M. Daws \cite{Daws} gave another proof for that.

The situation of operator biflatness is more subtle. Here, we recall the well-known fact  \cite[Theorem 2.4]{RX} that a completely contractive Banach algebra $\A$ is operator amenable if and only if $\A$ is operator biflat and $\A$ has a bounded approximate identity. Since $L^1(G)$ always has a bounded approximate identity we know that $L^1(G)$ is operator biflat if and only if $G$ is amenable. Meanwhile, $A(G)$ is known to be operator biflat when $G$ is a quasi-[SIN] group  \cite[Theorem 2.4]{ARS}, which is a strictly bigger class of groups than the class of amenable groups and the class of discrete groups.

In this paper we would like to address a negative solution to Conjecture \ref{conjecture} and a complete answer to Question \ref{question} by investigating operator biflatness. Here is our main theorem.
	\begin{thm}\label{thm-main1}
	Let $\Gb$ be a compact quantum group. If $L^1(\Gb)$ is operator biflat, then $\Gb$ is of Kac type. 
	\end{thm}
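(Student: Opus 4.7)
The plan is to argue by contradiction: assume $\Gb$ is a compact quantum group that is not of Kac type yet $L^1(\Gb)$ is operator biflat. Dualizing, biflatness produces a completely bounded $L^1(\Gb)$-bimodule map $\theta:L^\infty(\Gb)\vnt L^\infty(\Gb)\to L^\infty(\Gb)$ with $\theta\circ\Delta=\mathrm{id}_{L^\infty(\Gb)}$, where the bimodule actions on the domain, inherited by duality from $L^1(\Gb)\prt L^1(\Gb)$, are $\omega\cdot(x\otimes y)=x\otimes(\omega\cdot y)$ and $(x\otimes y)\cdot\omega=(x\cdot\omega)\otimes y$. Non-Kac-ness provides an irreducible unitary corepresentation $\alpha$ with non-scalar modular matrix $F_\alpha\in B(H_\alpha)$; equivalently, $d_\alpha^q>n_\alpha$.

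My first step is an intertwiner analysis on Peter--Weyl blocks. For each irreducible $\beta$, set $\mathcal L^\beta=\mathrm{span}\{u^\beta_{ij}\}\subset L^\infty(\Gb)$. The formulas $\omega\cdot u^\beta_{ij}=\sum_k\omega(u^\beta_{kj})\,u^\beta_{ik}$ and $u^\beta_{ij}\cdot\omega=\sum_k\omega(u^\beta_{ik})\,u^\beta_{kj}$ exhibit the rows of $\mathcal L^\beta$ as mutually isomorphic irreducible left $L^1(\Gb)$-modules, and the columns as mutually isomorphic irreducible right modules. On $\mathcal L^\alpha\otimes\mathcal L^\beta$ the left $L^1(\Gb)$-action involves only the second tensor factor and the right action only the first; combining this with Peter--Weyl projections $E_\gamma:L^\infty(\Gb)\to\mathcal L^\gamma$ and Schur's lemma then forces the restriction of $\theta$ to vanish when $\alpha\neq\beta$, and to take the form
\[
\theta(u^\alpha_{ij}\otimes u^\alpha_{kl})=c^\alpha(j,k)\,u^\alpha_{il}
\]
for some scalars $c^\alpha(j,k)$. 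The coproduct identity $\sum_k\theta(u^\alpha_{ik}\otimes u^\alpha_{kj})=u^\alpha_{ij}$ then collapses to the single normalization $\sum_k c^\alpha(k,k)=1$.

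My second step is a cb-norm estimate on each block. Identifying $\mathcal L^\alpha\simeq M_{n_\alpha}$ by $u^\alpha_{ij}\leftrightarrow e_{ij}$, the restriction of $\theta$ becomes the ternary product $A\otimes B\mapsto A\,C^\alpha\,B$ with $\mathrm{tr}(C^\alpha)=1$. The operator space structure that $\mathcal L^\alpha$ inherits from $L^\infty(\Gb)$ is however not the standard matrix structure: it carries a twist by the modular matrix $F_\alpha$. I would then bound below, over all $C^\alpha$ of trace one, the cb-norm of $A\otimes B\mapsto A\,C^\alpha\,B$ in this twisted structure by a quantity growing with the modular asymmetry (for instance, a power of $\|F_\alpha\|_{\mathrm{op}}/n_\alpha$). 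Running the argument on irreducible summands of tensor powers $\alpha^{\otimes n}$, whose modular matrices are $F_\alpha^{\otimes n}$ of norm $\|F_\alpha\|^n\to\infty$ when $F_\alpha$ is non-scalar, produces a sequence of blocks on which $\|\theta\|_{cb}$ is forced to diverge, the desired contradiction.

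The main obstacle I anticipate is the second step: extracting a cb-norm lower bound for the ternary product $A\otimes B\mapsto A\,C^\alpha\,B$ in the $F_\alpha$-twisted operator space structure that depends only on the modular data $F_\alpha$ and the normalization $\mathrm{tr}(C^\alpha)=1$, and not on the undetermined off-diagonal entries of $C^\alpha$. Step one is essentially algebraic intertwiner theory, and once step two is in place the tensor-power amplification against $F_\alpha^{\otimes n}$ should be routine.
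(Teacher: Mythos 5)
Your step one is sound and is essentially the paper's Proposition \ref{prop-splitting-form}, obtained by following Daws: the twisted bimodule structure on $(L^1(\Gb)\prt L^1(\Gb))^*$ together with Schur orthogonality forces $\theta(u^\alpha_{ij}\otimes u^\beta_{kl})=\delta_{\alpha\beta}X^\alpha_{jk}u^\alpha_{il}$ with $\sum_r X^\alpha_{rr}=1$ (and one should note, as the paper does, that normality of $\theta$ is not needed here). The genuine gap is your step two, which you yourself flag as the main obstacle and do not prove: you need a lower bound on $\norm{\theta}_{cb}$ over each block depending only on the modular data and the trace normalization, and neither the invariant you propose nor your amplification works as stated. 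The quantity $\norm{F_\alpha}_{op}/n_\alpha$ is the wrong one --- for the fundamental corepresentation of $SU_q(2)$ with $\tfrac12<q<1$ it equals $1/(2q)<1$, so it detects nothing --- and the amplification via $\norm{F_\alpha^{\otimes d}}=\norm{F_\alpha}^d\to\infty$ does not go through directly, because by step one $\theta$ acts as a ternary product only on the \emph{irreducible summands} $v_k$ of $(u^\alpha)^{\otimes d}$, and for an individual summand the ratio $\norm{Q^{v_k}}/\dim v_k$ need not diverge, since $\dim v_k$ can grow as fast as $n_\alpha^d$.

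The paper closes this gap as follows. Writing $Q^\alpha=\mathrm{diag}(\lambda^\alpha_1,\dots,\lambda^\alpha_{n_\alpha})$, the element $(Q^\alpha)^{-1/2}(u^\alpha)^t(Q^\alpha)^{1/2}\otimes u^\alpha$ has norm one by unitarity of $u^\alpha$ and the intertwining relation $(u^\alpha)^tQ^\alpha=Q^\alpha(u^\alpha)^t$; applying $id\otimes id\otimes\theta$ and factoring the result (after a flip of tensor legs) as an elementary tensor of a column in $C_{n_\alpha^2}$ and a row-valued element of $R_{n_\alpha^2}\injt L^\infty(\Gb)$, whose norms are computed exactly, gives $\norm{\theta}_{cb}\ge\bigl(\sum_i\abs{X^\alpha_{ii}}^2/\lambda^\alpha_i\bigr)^{1/2}\sqrt{m_\alpha}$; the unknown off-diagonal entries $X^\alpha_{ik}$ only increase the column factor and are simply discarded. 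Repeating with the two legs exchanged yields the same bound with $\lambda^\alpha_i$ in place of $1/\lambda^\alpha_i$; averaging the two via $(\lambda+\lambda^{-1})/2\ge1$ and applying Cauchy--Schwarz against $\sum_iX^\alpha_{ii}=1$ gives $m_\alpha\le\norm{\theta}_{cb}^2\,n_\alpha$. The correct invariant is thus the quantum dimension ratio $m_\alpha/n_\alpha=\mathrm{Tr}(Q^\alpha)/n_\alpha$, which --- unlike the operator-norm ratio --- is multiplicative under tensor products and additive over direct sums, so $(m_\alpha/n_\alpha)^d\le\max_k m_{v_k}/n_{v_k}\le\norm{\theta}_{cb}^2$ for all $d$, forcing $m_\alpha= n_\alpha$ and hence $Q^\alpha=I$. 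Until you can prove a block estimate of this quality --- uniform in the undetermined off-diagonal entries of $C^\alpha$ and compatible with decomposition into irreducibles --- the proof is incomplete.
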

This gives a complete characterization of operator amenability of $L^1$-algebras of compact quantum groups combined with the known results \cite[Theorem 3.2]{BT03} and \cite[Theorem 4.5]{Ruan96}.
	\begin{cor}
	Let $\Gb$ be a compact quantum group. Then, $L^1(\Gb)$ is operator amenable if and only if $\Gb$ is co-amenable and of Kac type.
	\end{cor}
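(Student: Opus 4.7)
The plan is to split the corollary into its two implications and assemble each from results already available.

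For the forward direction, I would start by assuming $L^1(\Gb)$ is operator amenable. The easy direction of the Ruan--Xu characterization \cite[Theorem 2.4]{RX} recalled in the excerpt gives that any operator amenable completely contractive Banach algebra is automatically operator biflat (and possesses a bounded approximate identity). Feeding this into the main Theorem~\ref{thm-main1}, I immediately obtain that $\Gb$ must be of Kac type. The remaining property---co-amenability of $\Gb$---is the content of the implication quoted from \cite[Theorem 3.2]{BT03}, so nothing further is required for this direction.

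For the reverse direction, I assume $\Gb$ is co-amenable and of Kac type. The goal is to reduce to Ruan's theorem \cite[Theorem 4.5]{Ruan96}, which was proved in the compact Kac algebra setting and asserts that $L^1(\Gb)$ is operator amenable if and only if the dual $\widehat{\Gb}$ is amenable. To apply it, I need to know that $\widehat{\Gb}$ is amenable. This is precisely one of the equivalences in Tomatsu's extension of \cite[Theorem 4.5]{Ruan96} to arbitrary compact quantum groups: co-amenability of the compact quantum group $\Gb$ is equivalent to amenability of its discrete dual $\widehat{\Gb}$. Combining these two results yields operator amenability of $L^1(\Gb)$.

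There is essentially no obstacle: all the hard work sits in Theorem~\ref{thm-main1} for the forward implication and in the Ruan/Tomatsu machinery for the reverse implication. The only mild care needed is to cite the results in the right category (completely contractive Banach algebras, operator projective tensor product), and to use Tomatsu's generalization rather than the original Kac-algebra version of Ruan's theorem so that the co-amenability hypothesis can be translated into amenability of $\widehat{\Gb}$ without any extra Kac-type assumption beyond the one we have already imposed.
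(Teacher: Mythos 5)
Your proposal is correct and follows essentially the same route as the paper, which derives the corollary by combining Theorem~\ref{thm-main1} with the known facts that operator amenability implies operator biflatness and co-amenability (\cite[Theorem 2.4]{RX}, \cite[Theorem 3.2]{BT03}) and with Ruan's characterization for compact Kac algebras \cite[Theorem 4.5]{Ruan96}. Your extra appeal to Tomatsu for the equivalence of co-amenability of $\Gb$ with amenability of $\widehat{\Gb}$ is harmless but not needed, since in the Kac setting this equivalence is already part of \cite[Theorem 4.5]{Ruan96}.
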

Since there are examples of co-amenable, compact quantum groups of non-Kac type, which are automatically amenable, we can see that Conjecture \ref{conjecture} fails to be true. The examples of such quantum groups include the $q$-deformations $G_q$, $0<q<1$ of simple compact Lie groups $G$ (\cite[Lemma 4.10]{Tom07} and \cite[Corollary 6.2]{Subfactor-Ban}) and $SU_q(2)$, $-1<q\ne 0<1$.

Theorem \ref{thm-main1} also gives the following complete characterization of operator biprojectivity of $L^1$-algebras of locally compact quantum groups, which answers Question \ref{question}.
	\begin{cor}
	Let $\Gb$ be a locally compact quantum group. Then, $L^1(\Gb)$ is operator biprojective if and only if $\Gb$ is compact and of Kac type.
	\end{cor}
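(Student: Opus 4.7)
The plan is to derive this corollary by combining the paper's main Theorem~\ref{thm-main1} with the two results of Aristov already cited in the introduction and the elementary implication that operator biprojectivity implies operator biflatness. The argument is essentially a bookkeeping exercise, so I would present it as a short chain of implications in each direction rather than build any new machinery.

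For the forward direction, assume $L^1(\Gb)$ is operator biprojective. I would first invoke Aristov's \cite[Theorem 4.7]{Aristov} to conclude that $\Gb$ must be compact. Next, I would use the standard fact that an operator biprojective completely contractive Banach algebra is automatically operator biflat: if $\rho:\A\to\A\prt\A$ is a completely bounded bimodule splitting of $m$, then $\rho^*:(\A\prt\A)^*\to\A^*$ is a completely bounded bimodule splitting of $m^*$. Applying this with $\A=L^1(\Gb)$ gives that $L^1(\Gb)$ is operator biflat, and then Theorem~\ref{thm-main1} of the present paper forces $\Gb$ to be of Kac type.

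For the backward direction, assume $\Gb$ is compact and of Kac type. Then $L^1(\Gb)$ coincides with the $L^1$-algebra of a compact Kac algebra, so Aristov's \cite[Theorem 4.12]{Aristov} (equivalently, the results of Ruan--Xu \cite{RX} or Daws \cite{Daws}) directly yields that $L^1(\Gb)$ is operator biprojective.

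Since all the ingredients are already stated in the introduction, there is no real obstacle: the only mild point to double-check is the biprojective~$\Rightarrow$~biflat implication, which is routine by dualising the splitting map and observing that $\A^*$-bimodule structures on $(\A\prt\A)^*$ and on $\A^*$ are the ones coming from dualising the $\A$-bimodule structures on $\A\prt\A$ and on $\A$. I would devote at most a sentence to this step and rely on the cited theorems for the rest.
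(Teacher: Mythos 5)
Your proposal is correct and is exactly the argument the paper intends: the corollary is stated as an immediate consequence of Theorem~\ref{thm-main1} together with Aristov's two results and the elementary implication that operator biprojectivity implies operator biflatness (which the paper itself records with the words ``Clearly, operator biprojectivity implies operator biflatness''). Nothing further is needed.
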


This paper is organized as follows. In section \ref{sec-Prelim} we collect basic materials about compact quantum groups, their $L^1$-algebras and homological/cohomological properties of Banach algebras. The proof of Theorem \ref{thm-main1} will be given in section \ref{sec-biflat}.

We will assume that the reader is familiar with a basic operator space theory, for which we refer to \cite{P03, ER}.

\section{Preliminaries} \label{sec-Prelim}

\subsection{Compact quantum groups and their $L^1$-algebras}

We recall some preliminaries on compact quantum groups, c.f. \cite{Wor} and also \cite{Tim}. A compact quantum group $\Gb$ is given by $(A, \Delta)$, a unital $C^*$-algebra $A$ and a unital $*$-homomorphism $\Delta : A \to A \otimes_{\min} A$ which is {\it co-associative} $(i.e.\; (\Delta\otimes id)\Delta = (id \otimes \Delta)\Delta)$ and satisfies the {\it cancellation law}, namely, both of the spaces
	$$\Delta(A)(A\otimes 1) = \text{span}\{\Delta(a)(b\otimes 1) : a,b\in A\}\;\; \text{and}\;\; \Delta(A)(1\otimes A)$$
are dense in $A \otimes_{\min} A$. We sometimes denote $A$ by $C(\Gb)$ and $\Delta$ is called the {\it co-multiplication} of $\Gb$.

For a compact quantum group $\Gb = (A, \Delta)$ there is a unique {\it Haar state} $h$ on $A$ such that
	$$(h\otimes id)\Delta(a) = h(a)1 = (id \otimes h)\Delta(a),\;\; a\in A.$$

The $L^1$-algebra of compact quantum group can be defined as follows. The {\it reduced} version $C(\Gb)_{\text{red}}$ of $C(\Gb)$ is the $C^*$-algebra $\rho(C(\Gb))$ for the GNS representation $\rho$ of $h$. We define $L^\infty(\Gb)$, the $L^\infty$-space over $\Gb$, to be the von Neumann algebra $C(\Gb)_{\text{red}}''$. Then the co-multiplication $\Delta : C(\Gb) \to C(\Gb) \otimes_{\min} C(\Gb)$ can be naturally extended to a unital normal $*$-isomorphism $L^\infty(\Gb) \to L^\infty(\Gb) \bar{\otimes} L^\infty(\Gb)$. By abuse of notation we will still denote the extended co-multiplication by $\Delta$. Thus, there is a completely contractive pre-adjoint map $\Delta_* : L^1(\Gb) \prt L^1(\Gb) \to L^1(\Gb)$, where $L^1(\Gb) = L^\infty(\Gb)_*$. This gives us a completely contractive Banach algebra structure on $L^1(\Gb)$, which will be assumed as the underlying Banach algebra structure whenever we deal with $L^1(\Gb)$. We will use the following convolution notation.
	$$f*g := \Delta_*(f\otimes g) = (f\otimes g) \Delta,\; f,g\in L^1(\Gb).$$
A compact quantum group $\Gb$ is called {\it co-amenable} if $L^1(\mathbb{G})$ has a bounded approximate identity.

A {\it finite dimensional corepresentation} of $\Gb$ is a matrix $u = (u_{ij}) \in M_n(A)$ such that
	$$\Delta(u_{ij}) = \sum^n_{k=1} u_{ik}\otimes u_{kj},\;\; 1\le i,j \le n.$$
We say that the corepresentation $u$ is {\it unitary} if $u$ is a unitary matrix and {\it irreducible} if we have $\{X \in M_n : Xu=uX\} = \mathbb{C} 1$. The number $n$ is called the {\it dimension} of $u$.

Let $\{u^\alpha : \alpha \in \I\}$ be a maximal family of finite dimensional irreducible unitary corepresentations of $\Gb$. For each $\alpha \in \I$ we denote
	$$n_\alpha = \text{dim} \: u^\alpha.$$
It is well known that for each $\alpha\in \I$ there is a unique positive invertible matrix $Q^\alpha \in M_{n_\alpha}$ with $\text{Tr}\: Q^\alpha = \text{Tr}(Q^\alpha)^{-1}$ satisfying the {\it Schur orthogonality relations}, namely for $\alpha, \beta \in \I$ and $1\le i,j \le n_\alpha$, $1\le k,l \le n_\beta$ we have
	$$h((u^\alpha_{ij})^* u^\beta_{kl}) = \delta_{\alpha \beta}\delta_{jl}\frac{((Q^\alpha)^{-1})_{ki}}{\text{Tr}\:Q^\alpha},\; h(u^\alpha_{ij} (u^\beta_{kl})^*) = \delta_{\alpha \beta}\delta_{ik}\frac{(Q^\alpha)_{lj}}{\text{Tr}\:Q^\alpha}.$$
The matrix $Q^\alpha$ gives us the {\it quantum dimension} $m_\alpha$ of $u^\alpha$ given by
	$$m_\alpha = \text{Tr}\: Q^\alpha=\text{Tr}\: (Q^\alpha)^{-1}.$$
Moreover, this $Q^\alpha$ can be chosen to be diagonal  (\cite[Proposition 2.1]{Daws}), so that we set
	$$Q^\alpha = \text{diag}(\lambda^\alpha_1, \cdots, \lambda^\alpha_{n_\alpha}).$$
In case all matrices $Q^\alpha, \alpha \in \mathcal{I}$ are equal to the identity, we say $\Gb$ is of {\it Kac type}, or a {\it Kac algebra}.

There is a more direct connection between $u^\alpha$ and $Q^\alpha$ as follows.
	\begin{prop}\label{prop-interwiner}
	For $\alpha \in \I$ and $1\le i \le n_\alpha$ the matrix $Q^\alpha$ is the unique matrix satisfying
		\begin{equation}\label{eq-interwiner-full}
		(u^\alpha)^t\overline{Q^\alpha}\overline{u^\alpha} = \overline{Q^\alpha}
		\end{equation}
and $\text{\rm Tr}(Q^\alpha) = \text{\rm Tr}((Q^\alpha)^{-1})$. Here, $\overline{u^\alpha}=((u^\alpha_{ij})^*)$.
	\end{prop}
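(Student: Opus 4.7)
My strategy is to handle both existence and uniqueness simultaneously by combining the left-invariance property $(h\otimes\mathrm{id})\Delta(a) = h(a)\cdot 1_A$ of the Haar state with the Schur orthogonality relations. Since $Q^\alpha = \text{diag}(\lambda^\alpha_1,\ldots,\lambda^\alpha_{n_\alpha})$ has real positive entries, $\overline{Q^\alpha} = Q^\alpha$, and writing out the $(i,j)$-entry of \eqref{eq-interwiner-full} reduces the matrix identity to the scalar identity in $A$
\[
X_{ij}\;:=\;\sum_{k}\lambda^\alpha_k\,u^\alpha_{ki}(u^\alpha_{kj})^*\;=\;\lambda^\alpha_i\,\delta_{ij}\cdot 1_A.
\]

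For existence I plan the following chain of steps. First, using the corepresentation law for $u^\alpha$ together with its $*$-adjoint for $(u^\alpha_{kj})^*$, factor the co-multiplication as $\Delta(X_{ij}) = \sum_{a,b} X_{ab}\otimes u^\alpha_{ai}(u^\alpha_{bj})^*$. Second, use the Schur orthogonality $h(u^\alpha_{ka}(u^\alpha_{kb})^*) = \delta_{ab}\lambda^\alpha_a/\text{Tr}\,Q^\alpha$ together with $\sum_k \lambda^\alpha_k = \text{Tr}\,Q^\alpha$ to obtain $h(X_{ab}) = \lambda^\alpha_a\delta_{ab}$. Third, apply $h\otimes\mathrm{id}$ to the factorization, and observe that the resulting sum $\sum_a\lambda^\alpha_a u^\alpha_{ai}(u^\alpha_{aj})^*$ is precisely $X_{ij}$. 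Finally, by left-invariance the same expression equals $h(X_{ij})\cdot 1_A = \lambda^\alpha_i\delta_{ij}\cdot 1_A$, so $X_{ij} = \lambda^\alpha_i\delta_{ij}\cdot 1_A$.

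For uniqueness, let $F\in M_{n_\alpha}$ be any positive matrix satisfying $(u^\alpha)^t\overline{F}\,\overline{u^\alpha} = \overline{F}$; componentwise, $\sum_{k,l}\overline{F_{kl}}\,u^\alpha_{ki}(u^\alpha_{lj})^* = \overline{F_{ij}}\cdot 1_A$. Apply $h$ to both sides and use $h(u^\alpha_{ki}(u^\alpha_{lj})^*) = \delta_{kl}\lambda^\alpha_i\delta_{ij}/\text{Tr}\,Q^\alpha$ to collapse the double sum to $\overline{F_{ij}} = \lambda^\alpha_i\delta_{ij}\,\overline{\text{Tr}\,F}/\text{Tr}\,Q^\alpha$, showing $F = cQ^\alpha$ with $c = \text{Tr}\,F/\text{Tr}\,Q^\alpha$. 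The normalization $\text{Tr}\,F = \text{Tr}\,F^{-1}$ combined with $\text{Tr}\,Q^\alpha = \text{Tr}(Q^\alpha)^{-1}$ forces $c^2 = 1$, and positivity of $F$ pins $c = 1$, giving $F = Q^\alpha$. I foresee no real obstacle: the main content lies in spotting the factorization of $\Delta(X_{ij})$, after which both halves of the proposition fall out of the interplay between left-invariance of $h$ and Schur orthogonality.
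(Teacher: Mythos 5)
Your proof is correct, but it takes a genuinely different route from the paper's. The paper disposes of the proposition in three lines by citing \cite[Proposition 3.2.17]{Tim}: $\overline{Q^\alpha}$ is the (suitably normalized) intertwiner from the conjugate corepresentation $\overline{u^\alpha}$ to the contragredient $((u^\alpha)^t)^{-1}$, and \eqref{eq-interwiner-full} is just the intertwining relation $\overline{Q^\alpha}\,\overline{u^\alpha}=((u^\alpha)^t)^{-1}\overline{Q^\alpha}$ multiplied through by $(u^\alpha)^t$; uniqueness is likewise quoted from the same source. You instead give a self-contained computation using only the tools already recorded in the preliminaries: for existence you show that $X_{ij}=\sum_k\lambda^\alpha_k u^\alpha_{ki}(u^\alpha_{kj})^*$ is recovered by $(h\otimes id)\Delta$ (via the factorization $\Delta(X_{ij})=\sum_{a,b}X_{ab}\otimes u^\alpha_{ai}(u^\alpha_{bj})^*$ together with $h(X_{ab})=\lambda^\alpha_a\delta_{ab}$ from Schur orthogonality), so left-invariance of the Haar state forces $X_{ij}=h(X_{ij})1=\lambda^\alpha_i\delta_{ij}1$; for uniqueness you apply $h$ entrywise to see that any solution is a scalar multiple $cQ^\alpha$, and the trace condition plus positivity pin $c=1$. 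I checked the index bookkeeping against the orthogonality relations as stated in the paper and it is consistent. What the citation buys the authors is brevity and the conceptual link to unitarizability of the conjugate corepresentation; what your argument buys is independence from \cite{Tim}, at the cost of taking the Schur orthogonality relations (which the paper in any case assumes as known) as the starting point. One small point in your favour: the uniqueness assertion must be read as uniqueness among \emph{positive} matrices, since $-Q^\alpha$ also satisfies \eqref{eq-interwiner-full} and the trace condition; you make the positivity hypothesis explicit where the paper leaves it implicit in its standing conventions.
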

\begin{proof}
By \cite[Proposition 3.2.17]{Tim} we know that $\overline{Q^\alpha}$ is an intertwiner from $\overline{u^\alpha}$ to $((u^\alpha)^t)^{-1}$, where $(u^\alpha)^t = (u^\alpha_{ji})$. Thus, we have
	$$\overline{Q^\alpha}\overline{u^\alpha} = ((u^\alpha)^t)^{-1}\overline{Q^\alpha} \Leftrightarrow (u^\alpha)^t\overline{Q^\alpha}\overline{u^\alpha} = \overline{Q^\alpha}.$$
The uniqueness also comes from \cite[Proposition 3.2.17]{Tim}.
\end{proof}

The above $Q^\alpha$ matrices are closely related to the following family of Woronowicz characters $f_z: {\rm Pol}(\Gb) \rightarrow \mathbb{C}$. Here, Pol$(\Gb)$ is the Hopf $*$-algebra generated by $\{u^\alpha_{ij} : \alpha\in \I,\; 1\le i,j \le n_\alpha \}$. Actually, $\{u^\alpha_{ij} : \alpha\in \I,\; 1\le i,j \le n_\alpha \}$ is a linear basis of Pol$(\Gb)$, and Pol$(\Gb)$ is equipped with the same co-multiplication $\Delta$, the coinverse $\kappa:  {\rm Pol}(\Gb) \rightarrow {\rm Pol}(\Gb)$ and the counit $\varepsilon: {\rm Pol}(\Gb) \rightarrow \mathbb{C}$. The characters $\{f_z\}_{z\in \Comp}$ have the following properties:
	\begin{enumerate}
		\item $f_z * f_{z'} = f_{z+z'}$, $z, z'\in \Comp$, $f_0 = \varepsilon$.
		\item $f_z \kappa (a) = f_{-z}(a)$, $f_z(a^*) = \overline{f_{-\overline{z}}(a)}$, $a\in \text{Pol}(\Gb)$, $z\in \Comp$, where $\kappa$ is the usual coinverse (or antipode) of $\Gb$.
		\item $\kappa^2(a) = f_{-1}*a*f_1$, $a\in \text{Pol}(\Gb)$.
		\item $h(ab) = h(b f_1 *a*f_1)$, $a, b\in \text{Pol}(\Gb)$.
	\end{enumerate}	
Here, $*$ denotes the convolution defined by
\[
\omega \ast \theta = (\omega \otimes \theta)\Delta, \qquad \omega \ast a = (\omega \otimes id) \Delta(a), \qquad a \ast \omega = (id \otimes \omega)\Delta(a),
\] 
where $\omega, \theta: {\rm Pol}(\Gb) \rightarrow \mathbb{C}$, $a \in {\rm Pol}(\mathbb{G})$.

Then, for a finite dimensional representation $u = (u_{ij}) \in M_n(A)$ we define
	$$Q^u := (id \otimes f_1)u.$$
Note that we have $Q^{u^\alpha} = Q^\alpha$ (c.f. \cite[(5.23)]{Wor}).

The following is a slight modification of \cite[Lemma 1.3]{Subfactor-Ban}.
	\begin{lem}\label{lem-fusion}
		For finite dimensional representations $u$ and $v$ of $\Gb$ we have
			\begin{enumerate}
				\item $Q^{u \oplus v} = Q^u \oplus Q^v$.
				\item $Q^{u \otimes v} = Q^u \otimes Q^v$.
			\end{enumerate}
		
	\end{lem}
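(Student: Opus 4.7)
The plan is to verify each identity by a direct computation from the definition $Q^u = (id \otimes f_1)u$, invoking only the elementary block/tensor structure of corepresentations together with the multiplicativity of $f_1$ as a character on ${\rm Pol}(\Gb)$.

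For (1), I would begin by recalling that, by definition, $u \oplus v \in M_{n+m}(A)$ is the block-diagonal matrix with $u$ in the upper-left block and $v$ in the lower-right block, and zeros elsewhere. Applying $id \otimes f_1$ entrywise preserves this block structure, since $f_1$ is linear with $f_1(0)=0$, so
\[
Q^{u \oplus v} = (id\otimes f_1)(u \oplus v) = \bigl((id\otimes f_1)u\bigr) \oplus \bigl((id\otimes f_1)v\bigr) = Q^u \oplus Q^v.
\]

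For (2), the essential input is that $f_1$ is multiplicative on ${\rm Pol}(\Gb)$, i.e.\ $f_1(ab) = f_1(a)f_1(b)$. This is a standard feature of the Woronowicz characters (they are algebra homomorphisms into $\Comp$, even though they are not $*$-homomorphisms unless $z \in i\Real$), and can be cited directly from \cite{Wor}. With the standard convention that the tensor product corepresentation has matrix entries $(u \otimes v)_{(i,k),(j,l)} = u_{ij} v_{kl}$ (which is precisely what makes $u \otimes v$ satisfy the corepresentation identity via $\Delta(u_{ij}v_{kl}) = \Delta(u_{ij})\Delta(v_{kl})$), one computes
\[
(Q^{u\otimes v})_{(i,k),(j,l)} = f_1(u_{ij}v_{kl}) = f_1(u_{ij})\,f_1(v_{kl}) = (Q^u)_{ij}(Q^v)_{kl} = (Q^u \otimes Q^v)_{(i,k),(j,l)}.
\]

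I do not anticipate any serious obstacle: once the multiplicativity of $f_1$ is in hand, the proof reduces to an indexing exercise. The only mild point requiring care is to remain consistent with the convention used elsewhere in the paper for the tensor product of corepresentations; with the convention above, the identity drops out cleanly from the definitions, and the lemma is essentially a restatement of the fact that the assignment $u \mapsto Q^u$ is compatible with the tensor structure of the representation category because $f_1$ is a character.
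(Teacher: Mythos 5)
Your proof is correct and is exactly the argument the paper has in mind: the paper's own proof is the one-line remark that (1) is clear and (2) follows from $f_1$ being a character, and your computation simply fills in the entrywise details of that same reasoning. No further comment is needed.
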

\begin{proof}
(1) is clear and (2) is from the fact that $f_1$ is a character.
\end{proof}

\subsection{Homological/cohomological properties of Banach algebras}

Let $\A$ be a completely contractive Banach algebra, i.e. $\A$ is a Banach algebra with an operator space structure, and the algebra multiplication map
	$$m: \A \prt \A \to \A$$
extends to a complete contraction, where $\prt$ is the projective tensor product of operator spaces.

We say that an $\A$-bimodule $X$, which is also an operator space, is an operator $\A$-bimodule if there is a completely bounded map $\A \prt X \prt \A \to X, \; a\otimes x \otimes b \mapsto a\cdot x \cdot b$. For any operator $\A$-bimodule the (operator) dual space $X^*$ is also an operator $\A$-bimodule via the following module structure:
	$$\la a\cdot \varphi \cdot b, x \ra := \la \varphi, b \cdot x \cdot a \ra,\; x\in X, \varphi \in X^*, a, b\in \A.$$
For any two operator $\A$-bimodules $X$ and $Y$ their projective tensor product $X\prt Y$ can be regarded as an operator $\A$-bimodule via the following module structure:
	$$a\cdot (x\otimes y) \cdot b := (a\cdot x) \otimes (y\cdot b),\; x\in X, y\in Y, a, b \in A.$$
If we combine the above two concepts, then $(X\prt Y)^*$ becomes an operator $\A$-bimodule via the following module structure:
	$$a\cdot (x^*\otimes y^*) \cdot b := (x^* \cdot b) \otimes (a \cdot y^*),\; x^*\in X^*, y^*\in Y^*, a, b \in A.$$ Note that the above module action looks a little bit twisted.
	
We say that $\A$ is {\it operator biprojective} if there is a completely bounded $\A$-bimodule map $\rho : \A \to \A \prt \A$ such that $m\circ \rho = id_\A$. We say that $\A$ is {\it operator biflat} if there is a completely bounded $\A$-bimodule map $\theta : (\A \prt \A)^* \to \A^*$ such that $\theta \circ m^* = id_{\A^*}.$ In the above cases $\rho$ and $\theta$ are called {\it splitting homomorphisms}. Clearly, operator biprojectivity implies operator biflatness.

We say that $\A$ is {\it operator amenable} if every completely bounded derivation $D : \A \to X^*$ is inner for any operator $\A$-bimodule $X$, i.e. there exists a $f \in X^\ast$ such that $D(a) = a\cdot f - f \cdot a$ for each $a\in \A$.

\section{Proof of the main result}\label{sec-biflat}

In \cite{Daws} M. Daws made the following important observation while he was investigating operator biprojectivity of $L^1(\Gb)$. Recall that we made the assumption that $Q^\alpha$ is a diagonal matrix for every $\alpha \in \mathcal{I}$, which plays a role in the proof of M. Daws.
\begin{prop} The following are equivalent.
	\begin{enumerate}
		\item
		 $L^1(\Gb)$ is operator biprojective with splitting homomorphism $\rho: L^1(\Gb) \to L^1(\Gb) \prt L^1(\Gb)$.
		\item
		There is a normal completely bounded map $\theta : L^\infty(\Gb) \bar{\otimes} L^\infty(\Gb) \to L^\infty(\Gb)$ such that
			\begin{align}
			\theta \circ \Delta & = id, \label{eq1}\\
			\Delta \circ \theta & = (\theta \otimes id)(id \otimes \Delta) = (id \otimes \theta)(\Delta \otimes id). \label{eq2}
			\end{align}
		\item
		There is a normal completely bounded map $\theta : L^\infty(\Gb) \bar{\otimes} L^\infty(\Gb) \to L^\infty(\Gb)$ and a family $\{X^\alpha \in M_{n_\alpha}: \alpha \in \I \}$ such that for $\alpha, \beta \in \I$, $1\le i,j\le n_\alpha$ and $1\le k,l\le n_\beta$,
			\begin{equation} \label{eq3}
			\theta(u^\alpha_{ij} \otimes u^\beta_{kl}) = \delta_{\alpha \beta}X^\alpha_{jk}u^\alpha_{il},\;\; \sum^{n_\alpha}_{r=1}X^\alpha_{rr} = 1.
			\end{equation}
	\end{enumerate}
If one of these conditions is satisfied, then the notation is consistent in the sense that $\theta$ in (2) and (3) can be taken the same and one may take $\rho^* = \theta$.
\end{prop}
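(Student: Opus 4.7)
The plan is to treat the equivalence as a pair of routine dualities ((1) $\Leftrightarrow$ (2) and (3) $\Rightarrow$ (2)) together with one genuine computation ((2) $\Rightarrow$ (3)) driven by Schur orthogonality and the linear independence of matrix coefficients.

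For (1) $\Leftrightarrow$ (2), I would set $\theta := \rho^\ast$; since $L^\infty(\Gb) \bar\otimes L^\infty(\Gb) = (L^1(\Gb) \prt L^1(\Gb))^\ast$ in the operator space sense, $\theta$ is a normal complete contraction into $L^\infty(\Gb)$, and every such $\theta$ arises this way. Because $m^\ast = \Delta$, the splitting identity $m \circ \rho = id$ dualizes to $\theta \circ \Delta = id$. Using the twisted dual module action recalled in the excerpt, a short Sweedler-style calculation shows that $\rho$ is a right $L^1(\Gb)$-module map exactly when $\Delta \circ \theta = (\theta \otimes id)(id \otimes \Delta)$, and is a left module map exactly when $\Delta \circ \theta = (id \otimes \theta)(\Delta \otimes id)$. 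This gives (2), and the construction is clearly reversible.

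For (3) $\Rightarrow$ (2), I would verify the relations of (2) directly on matrix coefficients, which form a weak-$\ast$ dense $\ast$-subalgebra of $L^\infty(\Gb)$. Evaluating $\theta \circ \Delta$ on $u^\alpha_{il}$ yields $\sum_k X^\alpha_{kk} u^\alpha_{il}$, so $\theta \circ \Delta = id$ is precisely the trace condition $\sum_r X^\alpha_{rr}=1$. Both sides of each covariance relation unfold to $\delta_{\alpha\beta}\, X^\alpha_{jk} \sum_r u^\alpha_{ir} \otimes u^\alpha_{rl}$, giving (\ref{eq2}).

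The substantive direction is (2) $\Rightarrow$ (3). I would expand
\[
\theta(u^\alpha_{ij} \otimes u^\beta_{kl}) = \sum_{\gamma, p, q} c^{\alpha\beta\gamma}_{ij,kl;p,q}\, u^\gamma_{pq},
\]
apply the two sides of $\Delta \theta = (\theta \otimes id)(id \otimes \Delta)$, and compare coefficients using linear independence of $\{u^\gamma_{pq}\}$. Since the right-hand side has $u^\beta_{sl}$ in the second tensor slot, one is forced to $\gamma = \beta$ and to $q = l$, collapsing the expansion to $\theta(u^\alpha_{ij} \otimes u^\beta_{kl}) = \sum_p d^{\alpha\beta}_{ij,k;p}\, u^\beta_{pl}$. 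Then the second covariance $\Delta \theta = (id \otimes \theta)(\Delta \otimes id)$ requires the first tensor slot to carry $u^\alpha$, which kills the $\alpha \neq \beta$ block and, when $\alpha = \beta$, forces $d^{\alpha\alpha}_{ij,k;p} = \delta_{ip}\, X^\alpha_{jk}$ for a well-defined matrix $X^\alpha$. Imposing $\theta \circ \Delta = id$ yields the trace-$1$ condition.

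The main obstacle is executing the index bookkeeping in (2) $\Rightarrow$ (3) cleanly. The trick is to exploit the two covariance relations asymmetrically: the first compresses the output into a one-parameter family $\sum_p (\cdot)\, u^\beta_{pl}$, and the second then pins down the coefficients as a single matrix $X^\alpha \in M_{n_\alpha}$.
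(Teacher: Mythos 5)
First, a remark on the comparison itself: the paper does not prove this proposition but quotes it from Daws \cite{Daws}, so the benchmark is the argument there. Your architecture --- dualize $\rho \leftrightarrow \theta$ for (1)$\Leftrightarrow$(2), verify (3)$\Rightarrow$(2) on the weak-$*$ dense span of matrix coefficients using normality, and extract the coefficient form for (2)$\Rightarrow$(3) --- is the standard one and matches that source. The (1)$\Leftrightarrow$(2) and (3)$\Rightarrow$(2) parts are fine as sketched (the identification $(L^1(\Gb)\prt L^1(\Gb))^* \cong L^\infty(\Gb)\bar\otimes L^\infty(\Gb)$ is the Effros--Ruan theorem, and normality of all maps lets you check identities on ${\rm Pol}(\Gb)\odot{\rm Pol}(\Gb)$).

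The gap is in your opening move for (2)$\Rightarrow$(3). You cannot ``expand $\theta(u^\alpha_{ij}\otimes u^\beta_{kl})=\sum_{\gamma,p,q}c\,u^\gamma_{pq}$ and compare coefficients by linear independence'': $\theta$ takes values in the von Neumann algebra $L^\infty(\Gb)$, and a general element of $L^\infty(\Gb)$ is not a linear combination of matrix coefficients in any sense to which linear independence applies --- that notion only controls finite sums inside ${\rm Pol}(\Gb)$. The missing step, and the place where Schur orthogonality actually does the work, is to first prove that $x:=\theta(u^\alpha_{ij}\otimes u^\beta_{kl})$ lands in a finite-dimensional coefficient space. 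Concretely, the two covariance relations in \eqref{eq2} give
\[
\Delta x \;=\; \sum_{r}\theta(u^\alpha_{ij}\otimes u^\beta_{kr})\otimes u^\beta_{rl}
\;=\; \sum_{s}u^\alpha_{is}\otimes\theta(u^\alpha_{sj}\otimes u^\beta_{kl}).
\]
Slicing with $id\otimes\psi_{r'}$, where $\psi_{r'}=\tfrac{m_\beta}{\lambda^\beta_l}\,h\bigl(\,\cdot\,(u^\beta_{r'l})^*\bigr)$ is normalized by the Schur orthogonality relations, the first expression returns exactly $\theta(u^\alpha_{ij}\otimes u^\beta_{kr'})$, while the second expression returns an element of ${\rm span}\{u^\alpha_{is}:1\le s\le n_\alpha\}$. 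The symmetric slicing on the first leg places $\theta(u^\alpha_{sj}\otimes u^\beta_{kl})$ in ${\rm span}\{u^\beta_{rl}:1\le r\le n_\beta\}$. Only now, with all values inside ${\rm Pol}(\Gb)$, does linear independence of $\{u^\gamma_{pq}\}$ apply: the intersection of the two spans kills the $\alpha\neq\beta$ blocks and forces $\theta(u^\alpha_{ij}\otimes u^\alpha_{kl})=c_{ij,kl}\,u^\alpha_{il}$; feeding this back into the slicing identity yields $c_{ij,kr'}=c_{r'j,kl}$ for all $i,l,r'$, so the scalar depends only on $(j,k)$ and defines $X^\alpha$, and $\theta\circ\Delta=id$ gives the trace condition. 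So your endgame is right, but the step that gets you from an arbitrary element of $L^\infty(\Gb)$ into ${\rm Pol}(\Gb)$ is absent, and it is the one step that cannot be waved through.
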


Our starting point is that if we assume operator biflatness of $L^1(\Gb)$, the corresponding splitting homomorphism $\theta: L^\infty(\Gb) \bar{\otimes} L^\infty(\Gb) \to L^\infty(\Gb)$ still satisfies the condition \eqref{eq3}.

\begin{prop}\label{prop-splitting-form}
Suppose that $L^1(\Gb)$ is operator biflat with splitting homomorphism $\theta: L^\infty(\Gb) \bar{\otimes} L^\infty(\Gb) \to L^\infty(\Gb)$. Then, $\theta$ satisfies \eqref{eq3}.
\end{prop}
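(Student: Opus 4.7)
Set $Y^{\alpha,\beta}_{ijkl} := \theta(u^\alpha_{ij} \otimes u^\beta_{kl}) \in L^\infty(\Gb)$. My plan is to exploit the left and right $L^1(\Gb)$-bimodule compatibilities of $\theta$ separately -- neither requires any coassociativity of $\theta$, which is unavailable in the biflat case -- to translate the bimodule property into two coaction-type identities for $\Delta(Y^{\alpha,\beta}_{ijkl})$, and then to extract the form in \eqref{eq3} via a Peter--Weyl decomposition. The normalization $\sum_r X^\alpha_{rr} = 1$ will drop out of $\theta\circ\Delta = \mathrm{id}$ applied to $u^\alpha_{ij}$, so the real task is the displayed formula.

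First I would unwind the twisted bimodule action on $(L^1(\Gb) \prt L^1(\Gb))^* = L^\infty(\Gb) \vnt L^\infty(\Gb)$: for $\omega, \tau \in L^1(\Gb)$ and $x, y \in L^\infty(\Gb)$, a direct calculation against $f \otimes g \in L^1(\Gb) \prt L^1(\Gb)$ gives
\[
\omega \cdot (x \otimes y) = x \otimes (\omega \cdot y), \qquad (x \otimes y) \cdot \tau = (x \cdot \tau) \otimes y,
\]
with the usual $L^1(\Gb)$-actions $\omega \cdot y = (\mathrm{id} \otimes \omega)\Delta(y)$ and $x \cdot \tau = (\tau \otimes \mathrm{id})\Delta(x)$ on $L^\infty(\Gb)$. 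Applying $\theta$ to the identity $\omega \cdot (u^\alpha_{ij} \otimes u^\beta_{kl}) = \sum_s \omega(u^\beta_{sl})\, u^\alpha_{ij} \otimes u^\beta_{ks}$ and invoking the bimodule property on the other side yields
\[
(\mathrm{id} \otimes \omega)\Delta\bigl(Y^{\alpha,\beta}_{ijkl}\bigr) = \sum_s \omega(u^\beta_{sl})\, Y^{\alpha,\beta}_{ijks} \qquad (\omega \in L^1(\Gb)),
\]
which, by separation of second-factor slices in $L^\infty(\Gb) \vnt L^\infty(\Gb)$, is equivalent to $\Delta(Y^{\alpha,\beta}_{ijkl}) = \sum_s Y^{\alpha,\beta}_{ijks} \otimes u^\beta_{sl}$. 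The symmetric computation for the right action produces $\Delta(Y^{\alpha,\beta}_{ijkl}) = \sum_r u^\alpha_{ir} \otimes Y^{\alpha,\beta}_{rjkl}$.

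Next, I would extract the explicit form. The first identity places $\Delta(Y^{\alpha,\beta}_{ijkl})$ inside the algebraic subspace $L^\infty(\Gb) \otimes V^\beta$, where $V^\gamma := \mathrm{span}\{u^\gamma_{mn}\}_{m,n}$. Combined with $\Delta(V^\gamma) \subseteq V^\gamma \otimes V^\gamma$ and the injectivity of $\Delta$ on each $V^\gamma$, a standard Peter--Weyl spectral-projection argument (on the second tensor leg) confines $Y^{\alpha,\beta}_{ijkl}$ to $V^\beta$, and Schur's lemma applied to the right subcomodule $\mathrm{span}\{Y^{\alpha,\beta}_{ijks}\}_s$ then yields $Y^{\alpha,\beta}_{ijkl} = \sum_m c^{\alpha,\beta}_{ijkm}\, u^\beta_{ml}$ for some scalars. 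Substituting into the second identity and matching coefficients in the linearly independent family $\{u^\gamma_{ab} \otimes u^{\gamma'}_{cd}\}$ of $\mathrm{Pol}(\Gb) \otimes \mathrm{Pol}(\Gb)$ forces $c^{\alpha,\beta}_{ijkm} = 0$ whenever $\alpha \neq \beta$; in the case $\alpha = \beta$ it forces $c^{\alpha,\alpha}_{ijkm} = 0$ unless $m = i$, with the surviving diagonal value $c^{\alpha,\alpha}_{ijki}$ independent of $i$. Setting $X^\alpha_{jk}$ equal to this common value produces the displayed equation \eqref{eq3}.

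The main technical subtlety I expect is the isotypic-restriction step: since $\theta$ is only assumed completely bounded rather than normal, the $Y^{\alpha,\beta}_{ijkl}$ are a priori merely elements of $L^\infty(\Gb)$ rather than polynomials, and it is the derived coaction identity (whose right hand side is manifestly polynomial in the second tensor leg) that forces $Y^{\alpha,\beta}_{ijkl}$ itself into $\mathrm{Pol}(\Gb)$ via the central isotypic projections. Once this containment is established, the remainder is finite-dimensional linear algebra on the matrix-coefficient basis.
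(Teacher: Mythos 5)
Your proposal is correct and follows essentially the same route as the paper: the key step in both is to convert the (twisted) $L^1(\Gb)$-bimodule property of $\theta$ into the coaction identities $\Delta(\theta(u^\alpha_{ij}\otimes u^\beta_{kl}))=\sum_s \theta(u^\alpha_{ij}\otimes u^\beta_{ks})\otimes u^\beta_{sl}=\sum_r u^\alpha_{ir}\otimes\theta(u^\alpha_{rj}\otimes u^\beta_{kl})$, observing that no normality of $\theta$ is needed. The only difference is that the paper then simply cites the extraction argument of Daws (Proposition 3.2 of \cite{Daws}), whereas you carry out that Peter--Weyl/Schur coefficient-matching step explicitly; your outline of it, including the isotypic-restriction subtlety you flag, is sound.
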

\begin{proof}
First, \eqref{eq1} is trivial from the definition of the splitting homomorphism. Secondly, the condition \eqref{eq2} can be replaced by the $L^1(\Gb)$-bimodule property of $\theta$. Indeed, for $x\in L^\infty(\Gb)$ and $\varphi, \psi \in L^1(\Gb)$ we have 
	\begin{align*}
	\la \Delta \circ \theta (x\otimes u^\alpha_{ij}), \varphi \otimes \psi \ra
	& = \la \theta (x \otimes u^\alpha_{ij}), \varphi*\psi \ra\\
	& = \la \psi * \theta (x\otimes u^\alpha_{ij}), \varphi \ra\\
	& = \la \theta[\psi * (x\otimes u^\alpha_{ij})], \varphi \ra\\
	& = \la \theta[x \otimes \psi * u^\alpha_{ij}], \varphi \ra.
	\end{align*}
Moreover, for any $\phi\in L^1(\Gb)$ we have $\la \psi * u^\alpha_{ij}, \phi \ra = \la u^\alpha_{ij}, \phi * \psi \ra = \la \Delta u^\alpha_{ij}, \phi \otimes \psi \ra$, so that we have
	$$\psi * u^\alpha_{ij} = (id \otimes \psi)\Delta u^\alpha_{ij} = \sum^{n_\alpha}_{r=1} \psi(u^\alpha_{rj})u^\alpha_{ir}.$$
Thus, we have
	$$\la \Delta \circ \theta (x\otimes u^\alpha_{ij}), \varphi \otimes \psi \ra = \sum^{n_\alpha}_{r=1} \psi(u^\alpha_{rj}) \la \theta(x\otimes u^\alpha_{ir}), \varphi \ra = \left\la \sum^{n_\alpha}_{r=1} \theta(x\otimes u^\alpha_{ir}) \otimes u^\alpha_{rj}, \varphi \otimes \psi \right\ra.$$
Consequently, we get
	$$\Delta \circ \theta (x\otimes u^\alpha_{ij}) = \sum^{n_\alpha}_{r=1} \theta(x\otimes u^\alpha_{ir}) \otimes u^\alpha_{rj}.$$
Note that we do not need normality of $\theta$, which is the main difference from the case of \cite[Lemma 3.1, Proposition 3.2]{Daws}. From this point on we can repeat the same argument of the proof of \cite[Proposition 3.2]{Daws} to get the conclusion we wanted.
\end{proof}

Now we present the proof of our main result. Here, we use some operator space theory. Recall that $R_n$ and $C_n$ are the row and column Hilbert space on $\ell^2_n$. For any operator space $E \subset B(H)$ for a Hilbert space $H$ and an element $\sum^n_{i=1} e_i \otimes x_i \in \ell^2_n \otimes E$ we have
	$$\norm{\sum^n_{i=1} e_i \otimes x_i}^2_{R_n \otimes_{\min} E} = \norm{\sum^n_{i=1}x_i x^*_i}_{B(H)},\;\; \norm{\sum^n_{i=1} e_i \otimes x_i}^2_{C_n \otimes_{\min} E} = \norm{\sum^n_{i=1}x^*_i x_i}_{B(H)},$$
where $\otimes_{\min}$ is the injective tensor product of operator spaces. Moreover, we have a completely isometric identification
	$$M_n \cong C_n \otimes_{\min} R_n,\;\; e_{ij}\mapsto e_i \otimes e_j.$$

By Proposition \ref{prop-splitting-form} we can assume $\theta$ satisfies the condition \eqref{eq3}. Now we fix $\alpha \in \I$. Then for $1\le i,j,k,l \le n_\alpha$ we have
	$$\theta(u^\alpha_{ij} \otimes u^\alpha_{kl}) = X^\alpha_{jk}u^\alpha_{il}.$$
From \eqref{eq-interwiner-full}, $[(u^\alpha)^t]^* = \overline{u^\alpha}$ and the fact that $Q^\alpha$ is diagonal we have $(u^\alpha)^tQ^\alpha = Q^\alpha (u^\alpha)^t$, so that we get
	\begin{align*}
	\norm{(Q^\alpha)^{-\frac{1}{2}}(u^\alpha)^t (Q^\alpha)^\frac{1}{2}}_{M_{n_\alpha} (L^\infty(\Gb))} & = \norm{(Q^\alpha)^{-\frac{1}{2}}(u^\alpha)^t Q^\alpha [(u^\alpha)^t]^* (Q^\alpha)^{-\frac{1}{2}}}^{\frac{1}{2}}_{M_{n_\alpha} (L^\infty(\Gb))}\\ & =  \norm{1}_{M_{n_\alpha} (L^\infty(\Gb))} = 1.
	\end{align*}	
Moreover, we have
	$$(Q^\alpha)^{-\frac{1}{2}}(u^\alpha)^t (Q^\alpha)^\frac{1}{2} = \left(u^\alpha_{ji}\sqrt{\frac{\lambda^\alpha_j}{\lambda^\alpha_i}}\right) = \sum^{n_\alpha}_{i,j=1}e_{ij}\otimes u^\alpha_{ji}\sqrt{\frac{\lambda^\alpha_j}{\lambda^\alpha_i}} \in M_{n_\alpha} (L^\infty(\Gb)).$$	
Since $u^\alpha$ is unitary we also have
	\begin{align*}
	\lefteqn{\norm{(Q^\alpha)^{-\frac{1}{2}}(u^\alpha)^t (Q^\alpha)^\frac{1}{2} \otimes u^\alpha}_{M_{n_\alpha} (L^\infty(\Gb)) \bar{\otimes} M_{n_\alpha} (L^\infty(\Gb))}}\\ & = \norm{\sum^{n_\alpha}_{i,j,k,l=1}e_{ij} \otimes e_{kl} \otimes u^\alpha_{ji}\sqrt{\frac{\lambda^\alpha_j}{\lambda^\alpha_i}} \otimes u^\alpha_{kl}}_{M_{n^2_\alpha} (L^\infty(\Gb)\bar{\otimes} L^\infty(\Gb))} = 1.
	\end{align*}
Then, we have
	\begin{align*}
		\lefteqn{(id_{M_{n_\alpha}} \otimes id_{M_{n_\alpha}}
                  \otimes
                  \theta)\left(\sum^{n_\alpha}_{i,j,k,l=1}e_{ij}
                  \otimes e_{kl} \otimes
                  u^\alpha_{ji}\sqrt{\frac{\lambda^\alpha_j}{\lambda^\alpha_i}}
                  \otimes u^\alpha_{kl}\right)}\\ & =
                \sum^{n_\alpha}_{i,j,k,l=1}e_{ij} \otimes e_{kl}
                \otimes
                X^\alpha_{ik}\sqrt{\frac{\lambda^\alpha_j}{\lambda^\alpha_i}}
                u^\alpha_{jl}\\ & = (\tau_{23} \otimes
                id_{L^\infty(\Gb)})\left[\left(
                  \sum^{n_\alpha}_{i,k=1} X^\alpha_{ik}\frac 1
                  {\sqrt{\lambda^\alpha_i}} e_i \otimes e_k \right)
                  \otimes \left( \sum^{n_\alpha}_{j,l=1}e_j \otimes
                  e_l \otimes {\sqrt{\lambda^\alpha_j}} u^\alpha_{jl}
                  \right)\right],
	\end{align*}
where $\tau_{23}$ is the map flipping the second and the third tensor component in
	$$M_{n_\alpha} \otimes_{\min} M_{n_\alpha} \cong C_{n_\alpha}\otimes_{\min} R_{n_\alpha} \otimes_{\min} C_{n_\alpha} \otimes_{\min} R_{n_\alpha}.$$
Now we have
	$$\sum^{n_\alpha}_{i,k=1} \frac {X^\alpha_{ik}}{\sqrt{\lambda^\alpha_i}} e_i \otimes e_k \in C_{n_\alpha}\otimes_{\min} C_{n_\alpha} \cong C_{n^2_\alpha}$$
and
	$$\sum^{n_\alpha}_{j,l=1}e_j \otimes e_l \otimes {\sqrt{\lambda^\alpha_j}} u^\alpha_{jl} \in R_{n_\alpha}\otimes_{\min} R_{n_\alpha} \otimes_{\min} L^\infty(\Gb) \cong R_{n^2_\alpha} \otimes_{\min} L^\infty(\Gb)$$
with the corresponding norms 	
	$$\left(\sum^{n_\alpha}_{i,k=1}\frac{\abs{X^\alpha_{ik}}^2}{ \lambda^\alpha_i}\right)^{\frac{1}{2}}$$
and
	$$\norm{\sum^{n_\alpha}_{j,l=1}\lambda^\alpha_ju^\alpha_{jl}(u^\alpha_{jl})^*}^{\frac{1}{2}}_{L^\infty(\Gb)} = \left(\sum^{n_\alpha}_{j=1}\lambda^\alpha_j\right)^{\frac{1}{2}} = \sqrt{m_\alpha},$$
respectively. This gives us a lower bound of $\norm{\theta}_{cb}$ by
	$$\norm{\theta}_{cb} \ge \left(\sum^{n_\alpha}_{i,k=1}\frac {\abs{X^\alpha_{ik}}^2}{ \lambda^\alpha_i}\right)^{\frac{1}{2}}\sqrt{m_\alpha} \ge \left(\sum^{n_\alpha}_{i=1}
\frac{\abs{X^\alpha_{ii}}^2}{ \lambda^\alpha_i}\right)^{\frac{1}{2}}\sqrt{m_\alpha}.$$
If we repeat the same estimate for $\displaystyle \sum^{n_\alpha}_{i,j,k,l=1}e_{ij} \otimes e_{kl} \otimes u^\alpha_{ij} \otimes u^\alpha_{lk}\sqrt{\frac{\lambda^\alpha_l}{\lambda^\alpha_k}}$, then we get
	$$\norm{\theta}_{cb} \ge \left(\sum^{n_\alpha}_{i=1}\abs{X^\alpha_{ii}}^2 \lambda^\alpha_i\right)^{\frac{1}{2}}\sqrt{m_\alpha}.$$
Combining the above two we have
	$$\norm{\theta}^2_{cb} \ge m_\alpha \sum^{n_\alpha}_{i=1}\abs{X^\alpha_{ii}}^2 \left(\frac{\lambda^\alpha_i+ (\lambda^\alpha_i)^{-1}}{2}\right) \ge m_\alpha \sum^{n_\alpha}_{i=1}\abs{X^\alpha_{ii}}^2 \ge \frac{m_\alpha}{n_\alpha}$$
from the Cauchy-Schwarz inequality and $\displaystyle \sum^{n_\alpha}_{i=1}X^\alpha_{ii} = 1$, so that we have
	$$m_\alpha \le \norm{\theta}^2_{cb} n_\alpha.$$
We can improve this estimate by tensoring. For any $d\in \mathbb{N}$ we consider the tensor power $(u^\alpha)^{\otimes d}$ which will be decomposed into the direct sum $\oplus^N_{k=1}v_k$ of finite dimensional irreducible unitary corepresentations. By applying Lemma \ref{lem-fusion} we have
	\begin{align*}
	(m^\alpha)^d & = (\text{Tr}\:Q^\alpha)^d = \text{Tr}\:Q^{(u^\alpha)^{\otimes d}} = \sum^N_{k=1} \text{Tr}\:Q^{v_k}\\
	& \le \norm{\theta}^2_{cb} \sum^N_{k=1} \text{dim}\:v_k = \norm{\theta}^2_{cb} \text{dim}\:(u^\alpha)^{\otimes d} = \norm{\theta}^2_{cb} (n^\alpha)^d.
	\end{align*}
By taking the power  $\frac{1}{d}$ and letting $d \to \infty$ we get
	$$m_\alpha \le n_\alpha,$$
which implies that $\Gb$ is of Kac type.

{\it Acknowledgement: The authors wish to thank Volker Runde for giving us the details about the problem and Adam Skalski for providing the reference \cite{Tom07}.}

\end{document}